\newtheorem{theorem}{Theorem}[section]
\newtheorem{lemma}[theorem]{Lemma}
\newtheorem{e-proposition}[theorem]{Proposition}
\newtheorem{corollary}[theorem]{Corollary}
\newtheorem{e-definition}[theorem]{Definition\rm}
\newtheorem{remark}{\it Remark\/}
\newenvironment{proof}{\paragraph*{\textbf{Proof}}}{\hfill$\square$}
\def\og{\leavevmode\raise.3ex\hbox{$\scriptscriptstyle\langle\!\langle$~}}
\def\fg{\leavevmode\raise.3ex\hbox{~$\!\scriptscriptstyle\,\rangle\!\rangle$}}
\newcommand{\R}{\mathbb{R}}
\newcommand{\calN}{\mathcal{N}}
\title{Variants of the Empirical Interpolation Method: symmetric formulation, choice of norms and rectangular extension}
\author{Fabien Casenave$^{1,2}$, Alexandre Ern$^{3}$ and Tony Leli\`evre$^{3}$\\~\\
$^1$ IGN LAREG, Univ Paris Diderot, Sorbonne Paris Cit\'e,\\5 rue Thomas Mann, 75205 Paris Cedex 13, France\\
$^2$ currently at SafranTech, Rue des Jeunes Bois,\\Ch\^ateaufort, CS 80112, 78772 Magny-Les-Hameaux, France\\
$^3$ Universit\'e Paris-Est, CERMICS (ENPC),\\ 77455 Marne la Vall\'ee Cedex 2, France}
\date{}
\begin{document}

\maketitle

\section*{Abstract}
The Empirical Interpolation Method (EIM) is a greedy procedure that constructs approximate representations of two-variable functions in separated form.
In its classical presentation, the two variables play a non-symmetric role.
In this work, we give an equivalent definition of the EIM approximation, in which the two variables play symmetric roles.
Then, we give a proof for the existence of this approximation, and extend it up to the convergence of the EIM,
and for any norm chosen to compute the error in the greedy step.
Finally, we introduce a way to compute a separated representation in the case where the number of selected values is different for each variable.
In the case of a physical field measured by sensors, this is useful to discard a broken sensor while keeping the information provided by the associated selected field.

\noindent{\bf MSC2010 classification} \vskip 0.5\baselineskip \noindent
{
65D05, 65D15, 68W25
}

\noindent{\bf Keywords} \vskip 0.5\baselineskip \noindent
{
Empirical Interpolation Method, rectangular case, sensor failure, symmetry
}

\selectlanguage{english}

\section{Introduction}

Consider a function $f: {\mathcal X} \times {\mathcal Y} \to\R$.
The Empirical Interpolation Method (EIM)~\cite{Barrault,Maday} is an offline/online procedure that provides an approximate representation $I_d(f)$ of $f$ in separated form, where
the integer $d$ denotes the number of terms in the representation.
The offline stage of the EIM consists in selecting some points $(x_1, \ldots, x_d) \in {\mathcal X}^d$
and $(y_1, \ldots, y_d) \in {\mathcal Y}^d$ in a greedy fashion, such that
\begin{subequations}
\begin{alignat}{1}
x_{k+1}&=\arg\max_{x \in {\mathcal X}} \|\left(f-I_k(f)\right)(x, \cdot)\|_{L^{\infty}(\mathcal{Y})},\label{eq:points_select1}\\
y_{k+1}&=\arg\max_{y \in  {\mathcal Y}} |\left(f-I_k(f)\right)(x_{k+1},y)|,\label{eq:points_select2}
\end{alignat}
\end{subequations}
where $I_k(f)$ denotes the
separated representation constructed with the $k$ first points $(x_l,y_l)$, $l\in\{1,..., k\}$.
The method is efficient when the error $f-I_d(f)$ is small for reasonably small values of $d$.
Some functions $q_l(y)$, $l\in\{1,..., d\}$ and a matrix $B$ of size $d\times d$ depending on these
points are also constructed, see~\cite{Barrault} and~\cite{Maday} for details.
The separated representation is then obtained as 
\begin{equation}
\label{eq:classical_sep_rep}
I_d(f)(x,y)=\sum_{1\le l\le d} \lambda_l(x)q_l(y), 
\end{equation}
where
the functions $\lambda_l(x)$, $l\in\{1,..., d\}$, solve the linear system
$\sum_{m=1}^{d}B_{l,m}{\lambda}_m(x)=f(x,y_l)$, $l\in\{1,..., d\}$.
The function $I_d(f)$ satisfies the following interpolation property~{\cite[Lemma~1]{Maday}}:
for all $m\in\{1,..., d\}$,
\begin{subequations}
\begin{alignat}{1}
I_d (f)(x,y_m) &= f(x,y_m), \quad \textnormal{for all } x\in{\mathcal X},\label{eq:interpolation_prop1}\\
I_d (f)(x_m,y) &= f(x_m,y), \quad \textnormal{for all } y\in{\mathcal Y}.\label{eq:interpolation_prop2}
\end{alignat}
\end{subequations}
In practice, the size $d$ is not chosen a priori, and
the greedy procedure stops when $|\left(f-I_k(f)\right)(x_{k+1},y_{k+1})|$ is small enough.
Define $\mathcal{U}:=\{f(x,\cdot), x\in\mathcal{X}\}$. Elements of $\mathcal{U}$ are functions from $\mathcal{Y}$ to $\mathbb{R}$.
\begin{theorem}[Existence of the decomposition,~{\cite[Theorem~1]{Maday}}]
\label{existence}
Assume that the interpolation points are chosen according to~\eqref{eq:points_select1}-\eqref{eq:points_select2}
and that $d < \dim{\rm span}(\mathcal{U})$. Then, the separated representation~\eqref{eq:classical_sep_rep} is well-defined.
\end{theorem}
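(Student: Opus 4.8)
The statement amounts to the invertibility of the matrix $B$, since the representation \eqref{eq:classical_sep_rep} is obtained by solving $\sum_{m=1}^{d}B_{l,m}\lambda_m(x)=f(x,y_l)$; the plan is therefore to show that $B$ is invertible whenever the greedy construction actually reaches step $d$. I would first make this matrix explicit: evaluating \eqref{eq:classical_sep_rep} at $y=y_l$ and invoking the interpolation property \eqref{eq:interpolation_prop1} gives $\sum_{m}q_m(y_l)\lambda_m(x)=f(x,y_l)$, so that $B_{l,m}=q_m(y_l)$ is the interpolation matrix of the basis $q_1,\dots,q_d$ at the points $y_1,\dots,y_d$. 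Reducing the claim to the invertibility of this concrete matrix is the first organizing step.

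The core of the argument is an induction over the construction step $k$ carrying two invariants: that the procedure never stalls, i.e. the residual used to define $q_k$ is not identically zero, and that $B$ is lower triangular with unit diagonal. For the triangular structure I would use that each $q_k$ is a normalization of the residual $f(x_k,\cdot)-I_{k-1}(f)(x_k,\cdot)$, which vanishes at $y_1,\dots,y_{k-1}$ because $I_{k-1}(f)(x_k,\cdot)$ interpolates $f(x_k,\cdot)$ at exactly those points; hence $q_m(y_l)=0$ for $l<m$, while the normalization yields $q_m(y_m)=1$. Consequently $\det B=1$ and $B$ is invertible, provided the construction indeed reaches step $d$ without breaking down.

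The crux, and the step I expect to be the main obstacle, is ruling out a premature breakdown, which is exactly where the hypothesis $d<\dim{\rm span}(\mathcal{U})$ enters. I would argue by contradiction: if at some step $k\le d$ the quantity maximized in \eqref{eq:points_select1} were zero, then $f(x,\cdot)=I_{k-1}(f)(x,\cdot)$ for every $x\in\mathcal{X}$, so every element of $\mathcal{U}$ would lie in ${\rm span}(q_1,\dots,q_{k-1})$, whence $\dim{\rm span}(\mathcal{U})\le k-1\le d-1<\dim{\rm span}(\mathcal{U})$, a contradiction. Therefore the maximal residual is strictly positive, and by the choice \eqref{eq:points_select2} of $y_k$ as its argmax the normalizing value $\left(f-I_{k-1}(f)\right)(x_k,y_k)$ is nonzero, so $q_k$ is well-defined and the induction closes. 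The delicate point requiring care is that the greedy maximization in \eqref{eq:points_select1} delivers the vanishing of the residual over all of $\mathcal{X}$, and not merely at the selected point $x_k$; it is precisely this uniform vanishing that makes the dimension count conclusive.
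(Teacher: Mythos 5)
Your proof is correct, and it is essentially the classical argument for the cited result [\cite{Barrault},\cite{Maday}]: you identify $B_{l,m}=q_m(y_l)$ as lower triangular with unit diagonal, the triangularity coming from the interpolation property of $I_{k-1}$ at the previously selected points $y_1,\dots,y_{k-1}$, and you rule out a breakdown of the normalization of $q_k$ by the dimension count permitted by $d<\dim{\rm span}(\mathcal{U})$. The paper, however, does not reprove Theorem~\ref{existence} along these lines (it only cites it); its own existence machinery, which yields the stronger Theorem~\ref{existence2}, runs differently. In Lemma~\ref{lemma_0} and Lemma~\ref{lemma_1} the object whose invertibility is tracked is not $B$ but the matrix $F^k=(f(x_l,y_m))_{1\le l,m\le k}$ of the symmetric formulation~\eqref{eq:sym_form}: assuming a nontrivial linear relation among the rows/columns of $F^{k+1}$, the paper shows directly that this forces $f(x_{k+1},y_{k+1})=I_k(f)(x_{k+1},y_{k+1})$, contradicting the selection rule; no normalized residuals $q_l$ and no triangular structure are ever introduced. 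That route buys symmetry in $x$ and $y$ (hence the variant~\eqref{eq:MP'1}--\eqref{eq:MP'2}), validity for an arbitrary norm in the greedy step~\eqref{eq:MP1} (your argument uses $L^\infty$ only through the harmless fact that a function with positive norm is not identically zero, so it would also generalize, but the triangular bookkeeping is tied to the classical presentation), and existence up to $d\le\dim{\rm span}(\mathcal{U})$ rather than $d<\dim{\rm span}(\mathcal{U})$, since the contradiction needs only that the residual at the newly selected couple is nonzero. What your route buys in exchange is the explicit statement $\det B=1$, i.e., quantitative information about the classical linear system defining~\eqref{eq:classical_sep_rep} rather than bare solvability.
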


In~\cite{casenave_ACM}, it is observed that $I_d(f)$ can be rewritten in an algebraically equivalent form as
\begin{equation}
\label{eq:sym_form}
I_d(f)(x,y)=\sum_{1\leq l,m\leq d} D_{l,m}f(x_l,y)f(x,y_m), 
\end{equation}
where the matrix $D$ depends on the points $x_l$, $y_m$, $l,m\in\{1,...,d\}$, and can be constructed recursively during the offline stage of the EIM.
It is easy to check that~\eqref{eq:interpolation_prop1}-\eqref{eq:interpolation_prop2} is satisfied if and only if $D=F^{-T}$, where $F_{l,m} = f(x_l,y_m)$,
which motivates an alternative presentation of the EIM based on Equation~\eqref{eq:sym_form}
where the variables $x$ and $y$ play symmetric roles. The double summation in~\eqref{eq:sym_form} emphasizes this symmetric role; note that, for instance, 
$I_d (f)(x,y)=\sum_{1\le l\le d} \tilde{\lambda}_l(x)\tilde{q}_l(y)$, with $\tilde{\lambda}_l(x)=\sum_{1\le m\le d} D_{l,m}f(x,y_m)$ and $\tilde{q}_l(y) = f(x_l,y)$.

Let $\|{\cdot}\|_{\mathcal Y}$ be a norm on~${\mathcal Y}$ and suppose that the interpolation points are now selected as
\begin{subequations}
\begin{alignat}{1}
x_{k+1}&=\arg\max_{x \in {\mathcal X}} \|\left(f-I_k(f)\right)(x, \cdot)\|_{\mathcal Y},\label{eq:MP1}\\
y_{k+1}&=\arg\max_{y \in  {\mathcal Y}} |\left(f-I_k(f)\right)(x_{k+1},y)|,\label{eq:MP2}
\end{alignat}
\end{subequations}
the difference with~\eqref{eq:points_select1}-\eqref{eq:points_select2} being the arbitrary choice for the norm $\|{\cdot}\|_{\mathcal Y}$ in the first line,
instead of just $\|{\cdot}\|_{L^{\infty}(\mathcal{Y})}$.
One can exchange the roles of $x$ and $y$ in the previous algorithm, leading to
\begin{subequations}
\begin{alignat}{1}
y_{k+1}&=\arg\max_{y \in {\mathcal Y}} \|\left(f-I_k(f)\right)( \cdot, y)\|_{\mathcal X},\label{eq:MP'1}\\
x_{k+1}&=\arg\max_{x \in  {\mathcal X}} |\left(f-I_k(f)\right)(x,y_{k+1})|,\label{eq:MP'2}
\end{alignat}
\end{subequations}
for an arbitrary norm $\|{\cdot}\|_{\mathcal X}$ on ${\mathcal X}$. In general, the couple $(x_{k+1},y_{k+1})$ resulting from~\eqref{eq:MP'1}-\eqref{eq:MP'2} differs from the one obtained
with~\eqref{eq:MP1}-\eqref{eq:MP2}.
Choosing the $L^{\infty}$-norm in ${\mathcal Y}$ and ${\mathcal X}$ in~\eqref{eq:MP1} and~\eqref{eq:MP'1} respectively, we infer that
\begin{equation*}
\label{eq:MP_Linfty}
(x_{k+1},y_{k+1})=\arg\max_{(x,y) \in  {\mathcal X} \times {\mathcal Y} } |\left(f-I_k(f)\right)(x,y)|,
\end{equation*}
thus recovering the choice made in~\eqref{eq:points_select1}-\eqref{eq:points_select2}.
For this specific choice of $L^\infty$-norms on $\mathcal X$ and $\mathcal Y$,
\eqref{eq:MP1}-\eqref{eq:MP2} is actually equivalent to~\eqref{eq:MP'1}-\eqref{eq:MP'2}.

The first contribution of this work is the following result:
\begin{theorem}
\label{existence2}
Assume that the interpolation points are chosen according to~~\eqref{eq:MP1}-\eqref{eq:MP2} or~\eqref{eq:MP'1}-\eqref{eq:MP'2} and
that $d \leq \dim{\rm span}(\mathcal{U})$. Then, the separated representation~\eqref{eq:sym_form} with $D=F^{-T}$, where $F_{l,m} = f(x_l,y_m)$,
is well-defined and satisfies~\eqref{eq:interpolation_prop1}-\eqref{eq:interpolation_prop2}.
\end{theorem}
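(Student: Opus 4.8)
The plan is to split the statement into two parts: the interpolation identity, which is immediate once the matrix $F$ is invertible, and the invertibility of $F$ itself, which is where the greedy selection truly enters. For the first part, writing $D=F^{-T}$ and evaluating \eqref{eq:sym_form} at $x=x_p$ gives $I_d(f)(x_p,y)=\sum_{l,m}(F^{-1})_{m,l}F_{p,m}f(x_l,y)=\sum_l\delta_{p,l}f(x_l,y)=f(x_p,y)$, since $\sum_m F_{p,m}(F^{-1})_{m,l}=\delta_{p,l}$, and symmetrically at $y=y_p$. Hence \eqref{eq:interpolation_prop1}-\eqref{eq:interpolation_prop2} hold as soon as $F$ is invertible, and the entire content of the theorem reduces to showing that $F$, with $F_{l,m}=f(x_l,y_m)$, $1\le l,m\le d$, is invertible.

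I would prove invertibility by induction on the leading principal submatrices. Let $F^{(k)}$ denote the $k\times k$ matrix $\big(f(x_l,y_m)\big)_{1\le l,m\le k}$ and let $r_k:=f-I_k(f)$ be the residual at step $k$, where $I_k(f)$ is built from $F^{(k)}$ and is well defined by the induction hypothesis that $F^{(k)}$ is invertible. The key algebraic step is the recursive determinant formula
\[
\det F^{(k+1)}=(f-I_k(f))(x_{k+1},y_{k+1})\,\det F^{(k)},
\]
which I would obtain from the Schur complement of $F^{(k)}$ in $F^{(k+1)}$: writing the last row and column of $F^{(k+1)}$ as $v^T=(f(x_{k+1},y_m))_{m\le k}$ and $u=(f(x_l,y_{k+1}))_{l\le k}$, the Schur complement equals $f(x_{k+1},y_{k+1})-v^T(F^{(k)})^{-1}u$, and the subtracted term $v^T(F^{(k)})^{-1}u$ is precisely $I_k(f)(x_{k+1},y_{k+1})$ by definition \eqref{eq:sym_form} of the approximation with $D=(F^{(k)})^{-T}$. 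Thus $F^{(k+1)}$ is invertible if and only if the newly selected residual value $r_k(x_{k+1},y_{k+1})$ is nonzero.

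It then remains to show $r_k(x_{k+1},y_{k+1})\ne 0$ for every $k<d$. I would argue by contradiction: if $r_k(x_{k+1},y_{k+1})=0$, then the greedy choice \eqref{eq:MP2} of $y_{k+1}$ forces $r_k(x_{k+1},\cdot)\equiv0$, so $\|r_k(x_{k+1},\cdot)\|_{\mathcal Y}=0$; the greedy choice \eqref{eq:MP1} of $x_{k+1}$ makes this the maximum over $x$, so $\|r_k(x,\cdot)\|_{\mathcal Y}=0$ for all $x$, and since $\|\cdot\|_{\mathcal Y}$ is a norm (hence definite) this forces $r_k\equiv0$, i.e. $f=I_k(f)$. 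But \eqref{eq:sym_form} then exhibits every $f(x,\cdot)$ as a linear combination of $f(x_1,\cdot),\dots,f(x_k,\cdot)$, whence $\dim{\rm span}(\mathcal U)\le k<d$, contradicting $d\le\dim{\rm span}(\mathcal U)$. This is exactly the point at which the hypothesis may be relaxed from the strict inequality of Theorem~\ref{existence} to $d\le\dim{\rm span}(\mathcal U)$, since the obstruction can only appear at a step $k\ge d$.

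Combining the two parts, starting from $\det F^{(1)}=f(x_1,y_1)\ne0$ (the same nonvanishing argument at $k=0$) and applying the recursion up to $k=d-1$ yields $\det F^{(d)}\ne0$, so $F$ is invertible and the interpolation identity of the first paragraph holds. The selection \eqref{eq:MP'1}-\eqref{eq:MP'2} is then handled by the mirror-image argument, exchanging the roles of $x$ and $y$ and replacing $F$ by $F^T$; here one uses that $\dim{\rm span}\{f(\cdot,y):y\in\mathcal Y\}=\dim{\rm span}(\mathcal U)$, so the same bound on $d$ governs both cases. I expect the main obstacle to be the Schur-complement identification of the residual value with the ratio of determinants, together with making the ``the norm is definite, hence $r_k\equiv0$'' step fully rigorous; the remaining steps are routine bookkeeping.
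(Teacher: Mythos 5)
Your proposal is correct, and its overall architecture coincides with the paper's: first the interpolation property is reduced to the invertibility of $F$ (this is exactly Lemma~\ref{lemma_0}), then invertibility is propagated by induction on the leading principal submatrices with the key step being ``nonzero residual at the newly selected point implies $F^{k+1}$ invertible'' (Lemma~\ref{lemma_1}), and finally the nonvanishing of the residual for $k<d\le\dim{\rm span}(\mathcal U)$ is deduced from the greedy selection (the paper states this tersely; your contradiction argument is precisely the content of Corollary~\ref{corollary_1} combined with the observation that $f=I_k(f)$ forces $\dim{\rm span}(\mathcal U)\le k$). The one genuine difference is how the key step is proved: the paper assumes a nontrivial kernel vector $(\lambda_1,\dots,\lambda_{k+1})$ of $F^{k+1}$, shows $\lambda_{k+1}\neq0$, and manipulates it to conclude $f(x_{k+1},y_{k+1})=I_k(f)(x_{k+1},y_{k+1})$; you instead use the Schur-complement identity $\det F^{(k+1)}=\big(f-I_k(f)\big)(x_{k+1},y_{k+1})\,\det F^{(k)}$, whose verification $v^T(F^{(k)})^{-1}u=I_k(f)(x_{k+1},y_{k+1})$ is immediate from \eqref{eq:sym_form}. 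Your route buys a little more: it gives the equivalence (not just the implication) between invertibility of $F^{(k+1)}$ and nonvanishing of the residual, plus the explicit product formula $\det F^{(d)}=\prod_{k=0}^{d-1}\big(f-I_k(f)\big)(x_{k+1},y_{k+1})$, at the cost of invoking determinants where the paper's argument is purely linear-algebraic. Your explicit remark that the mirror case \eqref{eq:MP'1}-\eqref{eq:MP'2} requires $\dim{\rm span}\{f(\cdot,y)\}=\dim{\rm span}(\mathcal U)$ is a point the paper glosses over (though one can also bypass it, since $f=I_k(f)$ directly exhibits every $f(x,\cdot)$ in the span of $f(x_1,\cdot),\dots,f(x_k,\cdot)$).
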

Theorem~\ref{existence2} extends Theorem~\ref{existence} in two ways. First, it allows for a more general selection of the interpolation points.
Second, the existence of the decomposition is ensured up to convergence. Since $\dim{\rm span}(\mathcal{U})$ is usually unknown, a practical consequence is that
for all $k\in\mathbb{N}$, either $f(x,y)=I_k(f)(x,y)$ for all $(x,y)\in\mathcal{X}\times\mathcal{Y}$, or the greedy procedure can be pursued.
Moreover, we observe that the proof of Theorem~\ref{existence2} given below is straightforward and does not rely explicitly on the Kolmogorov $n$-witdh.

The second contribution of this work is that starting from~\eqref{eq:sym_form}, we devise a rectangular extension of EIM.
An interesting application of this extension is an improved recovery procedure if it is decided a posteriori that the values of $f$ at some points $y_l$ are not
reliable and should be discarded. This situation is motivated for instance by sensor failure in the context of the Generalized Empirical Interpolation Method (GEIM)~\cite{GEIM},
which we address at the end of this work. Note that discarding a posteriori some interpolation point $y_l$ is not straightforward in the standard EIM setting since the
whole construction relies on couples of points and functions defined by induction. In the present setting, the interpolation points $x_l$ can be kept even if the points $y_l$ are discarded, thereby leading
to a rectangular formulation. The key idea is to replace the matrix inversion by a pseudo-inverse to compute the rectangular matrix $D$ to be used in~\eqref{eq:sym_form}.

\section{Symmetric formulation of EIM and proof of Theorem~\ref{existence2}}
\label{sec:sym}

\subsection{Symmetric formulation}

Consider some interpolation points $(x_1, \ldots, x_d) \in {\mathcal X}^d$ and $(y_1, \ldots, y_d) \in {\mathcal Y}^d$, and
define the matrix $F\in\mathbb{R}^{d\times d}$ such that $F_{l,m}=f(x_l,y_m)$, for all $l,m\in\{1,...,d\}$.
\begin{lemma}[Existence]
\label{lemma_0}
If the matrix $F$ is invertible, there exists a function $I_d(f):{\mathcal X} \times {\mathcal Y} \to\R$ in the form~\eqref{eq:sym_form}
satisfying the interpolation property~\eqref{eq:interpolation_prop1}-\eqref{eq:interpolation_prop1}.
\end{lemma}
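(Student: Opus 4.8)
The plan is to verify directly that the explicit formula~\eqref{eq:sym_form} with $D=F^{-T}$ satisfies the two interpolation identities~\eqref{eq:interpolation_prop1}-\eqref{eq:interpolation_prop2}. This is a pure linear-algebra computation, so the strategy is simply to substitute the interpolation nodes into~\eqref{eq:sym_form} and show that the double sum collapses to the correct value using the defining relation $DF^T = \Id$ (equivalently $F^TD=\Id$, since $D=F^{-T}$). The definition of $I_d(f)$ in the desired form is given for free by writing down~\eqref{eq:sym_form}; the only content of the lemma is that this particular $D$ makes the interpolation property hold, and that it is well-defined, which follows immediately from the invertibility hypothesis on $F$ (guaranteeing $F^{-T}$ exists).

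First I would establish~\eqref{eq:interpolation_prop2}, the interpolation in the $x$-variable. Fixing an index $m\in\{1,\dots,d\}$ and setting $x=x_m$ in~\eqref{eq:sym_form}, I would use $f(x_m,y_p)=F_{m,p}$ to write
\begin{equation*}
I_d(f)(x_m,y)=\sum_{1\le l,p\le d} D_{l,p}\,f(x_l,y)\,F_{m,p}.
\end{equation*}
The inner sum over $p$ is $\sum_p F_{m,p}D_{l,p}=(FD^T)_{m,l}=(F(F^{-T})^T)_{m,l}=(FF^{-1})_{m,l}=\delta_{m,l}$, so the whole expression reduces to $f(x_m,y)$, which is exactly~\eqref{eq:interpolation_prop2}. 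Symmetrically, for~\eqref{eq:interpolation_prop1} I would set $y=y_m$, use $f(x_p,y_m)=F_{p,m}$, and compute the sum over $l$ as $\sum_l D_{l,p}F_{l,m}=(F^TD)_{m,p}=(F^TF^{-T})_{m,p}=\delta_{m,p}$, leaving $\sum_p \delta_{m,p}f(x,y_p)=f(x,y_m)$.

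There is no real obstacle here; the content is entirely in choosing $D=F^{-T}$ so that the two matrix products $FD^T$ and $F^TD$ both equal the identity, which is precisely the observation already recorded in the excerpt after~\eqref{eq:sym_form}. The only point demanding minor care is bookkeeping with the transposes — making sure the summation index being contracted matches the correct slot of $D$ in each of the two identities, since the two interpolation properties contract different indices of the double sum. Once the index conventions are fixed consistently with $F_{l,m}=f(x_l,y_m)$, both verifications are one-line Kronecker-delta collapses, and well-definedness is immediate from the assumed invertibility of $F$.
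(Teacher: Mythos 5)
Your proposal is correct and follows essentially the same route as the paper: both arguments substitute the interpolation nodes into~\eqref{eq:sym_form}, observe that the conditions $F^TD=\Id$ and $DF^T=\Id$ make the double sum collapse to a Kronecker delta, and conclude that $D=F^{-T}$ works since $F$ is invertible. Your index bookkeeping with the transposes is accurate, and you merely write out explicitly the second verification that the paper dispatches with ``similarly.''
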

\begin{proof}
We observe that for all $m_0\in\{1,...,d\}$ and all $x\in\mathcal{X}$,
\begin{equation*}
I_d(f)(x,y_{m_0}) = \sum_{1 \le l,m \le d} D_{l,m} f(x_l,y_{m_0}) f(x,y_m) = \sum_{1 \le m \le d} \left[ \sum_{1 \le l \le d} D_{l,m} f(x_l,y_{m_0}) \right] f(x,y_m).
\end{equation*}
Hence, if 
$F^T D = {\rm Id}$,~\eqref{eq:interpolation_prop1} holds.
Similarly, if $D F^T = {\rm Id}$,~\eqref{eq:interpolation_prop2} holds.
Since $F$ is invertible, we see that we can choose $D= F^{-T}$.
\end{proof}

Note that the EIM interpolation is thus given by $I_d(f)(x,y)= \sum_{1 \le l,m \le d} (F^{-1})_{m,l} f(x_l,y) f(x,y_m)$.

\subsection{Proof of Theorem~\ref{existence2}}

We begin with the following result on the stopping criterion of the greedy procedure.
\begin{lemma}[Stopping criterion]
\label{lemma_1}
Consider a set of points $(x_l,y_m)_{1 \le l,m \le k+1}$ and assume that the matrix $F^k:=(f(x_l,y_m))_{1 \le l,m \le k}$ is
invertible.
If $$\left(f-I_k(f)\right)(x_{k+1},y_{k+1}) \neq 0,$$ then the matrix
$F^{k+1}:=(f(x_l,y_m))_{1 \le l,m \le k+1}$ is also invertible.
 \end{lemma}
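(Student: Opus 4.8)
The plan is to reduce the invertibility of $F^{k+1}$ to the non-vanishing of a single scalar, namely the Schur complement of the block $F^k$, and then to recognize that this scalar is exactly the residual $(f-I_k(f))(x_{k+1},y_{k+1})$. First I would write $F^{k+1}$ in block form as
\begin{equation*}
F^{k+1} = \begin{pmatrix} F^k & c \\ r^T & f(x_{k+1},y_{k+1}) \end{pmatrix},
\end{equation*}
where $c = (f(x_l,y_{k+1}))_{1 \le l \le k} \in \R^k$ collects the entries of the new column lying in the first $k$ rows, and $r = (f(x_{k+1},y_m))_{1 \le m \le k} \in \R^k$ collects those of the new row lying in the first $k$ columns. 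Since $F^k$ is invertible by assumption, the standard block-determinant (Schur complement) identity gives
\begin{equation*}
\det(F^{k+1}) = \det(F^k)\,\bigl( f(x_{k+1},y_{k+1}) - r^T (F^k)^{-1} c \bigr).
\end{equation*}
Because $\det(F^k) \neq 0$, the matrix $F^{k+1}$ is invertible if and only if the scalar $s := f(x_{k+1},y_{k+1}) - r^T (F^k)^{-1} c$ is nonzero.

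The key step is then to identify $s$ with the residual evaluated at the new point. Using the symmetric formula $I_k(f)(x,y) = \sum_{1 \le l,m \le k} ((F^k)^{-1})_{m,l}\, f(x_l,y)\, f(x,y_m)$ recalled just before the lemma, I would evaluate at $(x,y) = (x_{k+1},y_{k+1})$. Substituting $f(x_l,y_{k+1}) = c_l$ and $f(x_{k+1},y_m) = r_m$ and reorganizing the double sum yields
\begin{equation*}
I_k(f)(x_{k+1},y_{k+1}) = \sum_{1 \le m \le k} r_m \sum_{1 \le l \le k} ((F^k)^{-1})_{m,l}\, c_l = r^T (F^k)^{-1} c,
\end{equation*}
so that $s = f(x_{k+1},y_{k+1}) - I_k(f)(x_{k+1},y_{k+1}) = (f - I_k(f))(x_{k+1},y_{k+1})$.

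Combining the two steps gives $\det(F^{k+1}) = \det(F^k)\,(f - I_k(f))(x_{k+1},y_{k+1})$. Since both factors on the right-hand side are nonzero by hypothesis, $\det(F^{k+1}) \neq 0$, and therefore $F^{k+1}$ is invertible, which is the claim. The proof is essentially a bookkeeping exercise rather than a deep argument; the only point requiring genuine care is the index matching between the transpose-inverse $D = (F^k)^{-T}$ (so that $D_{l,m} = ((F^k)^{-1})_{m,l}$) appearing in the symmetric formula and the matrix-vector product $r^T (F^k)^{-1} c$ produced by the Schur complement. Once the identification $s = (f - I_k(f))(x_{k+1},y_{k+1})$ is made, the conclusion is immediate.
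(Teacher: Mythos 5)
Your proof is correct, but it takes a genuinely different route from the paper's. The paper argues by contradiction at the level of linear dependence: it assumes $F^{k+1}$ is singular, extracts a nontrivial relation $\sum_{m=1}^{k+1}\lambda_m f(x_l,y_m)=0$ for all $l$, notes $\lambda_{k+1}\neq 0$ because $F^k$ is invertible, and then multiplies through by $D^k=(F^k)^{-T}$ to solve for the $\lambda_{m_0}$ and reconstruct the identity $f(x_{k+1},y_{k+1})=I_k(f)(x_{k+1},y_{k+1})$, contradicting the hypothesis. You instead go through the determinant, using the block factorization to reduce everything to the Schur complement scalar $s=f(x_{k+1},y_{k+1})-r^T(F^k)^{-1}c$, and then identify $s$ with the residual via the symmetric formula; your index bookkeeping for $D=(F^k)^{-T}$ versus $r^T(F^k)^{-1}c$ is right. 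Both arguments ultimately rest on the same identity --- the residual at the new point equals the Schur complement of $F^k$ in $F^{k+1}$ --- but your version buys a little more: it yields the explicit recursion $\det(F^{k+1})=\det(F^k)\,(f-I_k(f))(x_{k+1},y_{k+1})$ and hence the equivalence ($F^{k+1}$ invertible if and only if the residual is nonzero, given $F^k$ invertible), whereas the paper proves only the stated implication. The paper's version, on the other hand, never invokes determinants and so transfers verbatim to the GEIM setting later in the paper, where the same contradiction argument is repeated with linear forms $\sigma_l$ in place of point evaluations; your argument would transfer just as well, since the Schur complement identity is purely algebraic.
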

\begin{proof}
Assume that $F^{k+1}$ is not invertible. This means that there exist $(\lambda_1, \ldots, \lambda_{k+1})$ not all zero such that,
$\forall l \in \{1, \ldots,k+1\}$, $\sum_{m=1}^{k+1} \lambda_m f(x_l,y_m)=0$, which writes
\begin{equation}
\label{eq:1}
\sum_{1\le m\le k} \lambda_m f(x_l,y_m)= - \lambda_{k+1} f(x_l,y_{k+1}).
\end{equation}
Note that necessarily, $\lambda_{k+1} \neq 0$, since otherwise the matrix $F^k$ would not be
invertible. Let us introduce the matrix $D^k=(F^k)^{-T}$ so that
$I_k(f)(x,y)=\sum_{1 \le l,m \le k} D^k_{l,m} f(x_l,y)f(x,y_m)$. Let $m_0\in\{1,...,k\}$. From~\eqref{eq:1}, we infer that
$$\sum_{1\le l\le k} D^k_{l,m_0} \sum_{1\le m\le k} \lambda_m f(x_l,y_m)= - \lambda_{k+1}\sum_{1\le l\le k} D^k_{l,m_0} f(x_l,y_{k+1}).$$
and since, for all $m,m_0 \in\{1,...,k\}$, $\sum_{1\le l\le k} D^k_{l,m_0}  f(x_l,y_m)= \delta_{m,m_0}$,
exchanging summations in the left-hand side leads to
$$ \lambda_{m_0}= - \lambda_{k+1}\sum_{1\le l\le k} D^k_{l,m_0} f(x_l,y_{k+1}).$$
We deduce that
$$\sum_{1\le m_0\le k} \lambda_{m_0} f(x,y_{m_0})=- \lambda_{k+1}
\sum_{1\le l\le k}~\sum_{1\le m_0\le k} D^k_{l,m_0} f(x_l,y_{k+1})
f(x,y_{m_0})=-\lambda_{k+1} I_k(f)(x,y_{k+1}).$$
Taking $x=x_{k+1}$ and using again~\eqref{eq:1}, we infer that
$$- \lambda_{k+1} f(x_{k+1},y_{k+1})=-\lambda_{k+1} I_k(f)(x_{k+1},y_{k+1})$$
and recalling that $\lambda_{k+1} \neq 0$, we conclude that
$$ f(x_{k+1},y_{k+1})= I_k(f)(x_{k+1},y_{k+1}).$$
\end{proof}

It is clear from~\eqref{eq:MP1}-\eqref{eq:MP2} or~\eqref{eq:MP'1}-\eqref{eq:MP'2} that for all $k<\dim{\rm span}(\mathcal{U})$, $\left(f-I_k(f)\right)(x_{k+1},y_{k+1}) \neq 0$. Then, from 
Lemma~\ref{lemma_1}, $I_{k+1}$ is well-defined, and Theorem~\ref{existence2} is proved.

\begin{corollary}[Termination]
\label{corollary_1}
If $(x_{k+1},y_{k+1})$ is selected according to~\eqref{eq:MP1}-\eqref{eq:MP2} or~\eqref{eq:MP'1}-\eqref{eq:MP'2}, then
$|\left(f-I_k(f)\right)(x_{k+1},y_{k+1})| = 0,$ implies that $\left(f-I_k(f)\right)(x,y)=0$ for
all $(x,y) \in {\mathcal X} \times {\mathcal Y}$. 
\end{corollary}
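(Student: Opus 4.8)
The plan is to unwind the two nested $\arg\max$ operations defining the greedy selection and to exploit the positive-definiteness of the chosen norm. I would treat the selection \eqref{eq:MP1}-\eqref{eq:MP2} first; the case \eqref{eq:MP'1}-\eqref{eq:MP'2} then follows verbatim after exchanging the roles of $x$ and $y$ and replacing $\|{\cdot}\|_{\mathcal Y}$ by $\|{\cdot}\|_{\mathcal X}$.

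First I would use the inner selection step \eqref{eq:MP2}. Since $y_{k+1}$ is chosen to maximize $|(f-I_k(f))(x_{k+1},y)|$ over $y\in\mathcal{Y}$, the hypothesis $|(f-I_k(f))(x_{k+1},y_{k+1})|=0$ forces $\max_{y\in\mathcal{Y}}|(f-I_k(f))(x_{k+1},y)|=0$, and hence $(f-I_k(f))(x_{k+1},y)=0$ for every $y\in\mathcal{Y}$. In other words, the residual vanishes identically on the slice $\{x_{k+1}\}\times\mathcal{Y}$, so the function $(f-I_k(f))(x_{k+1},\cdot)$ is the zero function on $\mathcal{Y}$, and in particular $\|(f-I_k(f))(x_{k+1},\cdot)\|_{\mathcal Y}=0$.

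Next I would use the outer selection step \eqref{eq:MP1}. By definition $x_{k+1}$ maximizes $\|(f-I_k(f))(x,\cdot)\|_{\mathcal Y}$ over $x\in\mathcal{X}$, so the previous step yields $\max_{x\in\mathcal{X}}\|(f-I_k(f))(x,\cdot)\|_{\mathcal Y}=\|(f-I_k(f))(x_{k+1},\cdot)\|_{\mathcal Y}=0$. As a norm is nonnegative, this forces $\|(f-I_k(f))(x,\cdot)\|_{\mathcal Y}=0$ for every $x\in\mathcal{X}$.

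Finally, invoking the definiteness of the norm $\|{\cdot}\|_{\mathcal Y}$, namely that the only element of zero norm is zero, I would conclude that $(f-I_k(f))(x,\cdot)=0$ for each $x\in\mathcal{X}$, that is $(f-I_k(f))(x,y)=0$ for all $(x,y)\in\mathcal{X}\times\mathcal{Y}$. There is no genuine obstacle here: the argument is a direct two-level unwinding of the $\arg\max$ definitions, and the only structural property actually used is that $\|{\cdot}\|_{\mathcal Y}$ (respectively $\|{\cdot}\|_{\mathcal X}$ in the symmetric case) separates points. The single point warranting care is to apply definiteness only at the last step, once the maximum of the norm over $x$ has been shown to vanish, rather than conflating the two optimization levels.
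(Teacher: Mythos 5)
Your proof is correct and follows essentially the same two-step unwinding as the paper: first use \eqref{eq:MP2} to kill the residual on the slice $\{x_{k+1}\}\times\mathcal{Y}$, then use \eqref{eq:MP1} together with the definiteness of $\|{\cdot}\|_{\mathcal Y}$ to conclude, with the symmetric case handled identically. No differences worth noting.
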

\begin{proof}
Suppose that $(x_{k+1},y_{k+1})$ is selected according to~\eqref{eq:MP1}-\eqref{eq:MP2}. Then
$|\left(f-I_k(f)\right)(x_{k+1},y_{k+1})|~=~0$ implies by definition of $y_{k+1}$ that for all $y \in {\mathcal Y}$,
$|\left(f-I_k(f)\right)(x_{k+1},y)|~=~0$. This in turn means that $\|\left(f-I_k(f)\right)(x_{k+1},\cdot)\|_{\mathcal Y}~=~0$. Then, by definition of
$x_{k+1}$, for all $x \in {\mathcal X}$, $\|\left(f-I_k(f)\right)(x,\cdot)\|_{\mathcal Y}=0$, and thus $f=I_k(f)$.
The proof is similar in the case where $(x_{k+1},y_{k+1})$ is selected according to~\eqref{eq:MP'1}-\eqref{eq:MP'2}.
\end{proof}

The practical consequence of the above results is that,
for all $k\in\mathbb{N}$, either $|\left(f-I_k(f)\right)(x_{k+1},y_{k+1})|~=~0$ and
$f=I_k(f)$ (Corollary~\ref{corollary_1}), or $I_{k+1}$ can be constructed (Lemma~\ref{lemma_1}) and the greedy procedure can be pursued.

\begin{remark}[Goal-oriented approximation]
In a reduced-basis context, the EIM procedure is typically used
to obtain a separated representation of the coefficients of a parametrized partial differential equation.
As an example, let us consider the right-hand side $f(x,y)$ of a PDE discretized using a Galerkin basis $\{\varphi_1,\ldots,\varphi_N\}$.
Here, $x$ is the parameter and $y$ the space variable. The discretized right-hand side is then $b(x)_j=\int_{\Omega}f(x,y)\varphi_j(y)\,dy$
and using the EIM approximation, it actually can be written as a linear combination of functions of $x$:
$(I_d(b)(x))_j=\int_{\Omega}I_d(f)(x,y)\varphi_j(y)\,dy=\sum_{1\leq l,m\leq d}D_{l,m}f(x,y_m)\int_{\Omega}f(x_l,y)\varphi_j(y)\,dy$.
This separated representation is very important for the overall efficiency of the greedy procedure (see~\cite{Barrault}).
Consider a finite-dimensional subspace of a Hilbert space with basis $\{\varphi_1,..., \varphi_N\}$.
Define $b(x)_{j}=\int_{\Omega}f(x,y)\varphi_j(y) dy$ for all $j\in\{1,\ldots,N\}$, and
its EIM-based approximation $(I_d (b)(x))_{j}=\int_{\Omega}I_d (f)(x,y)\varphi_j(y)dy$. If the EIM approximation $I_d$
has been constructed using~\eqref{eq:MP1}-\eqref{eq:MP2}, the error in the greedy procedure is assessed on the quality of the approximation
of $f(x,y)$. A goal-oriented alternative is to assess the quality of the approximation of $f$ on the object $b$ to be approximated:
\begin{subequations}
\begin{alignat}{1}
x_{k+1}&=\arg\max_{x \in {\mathcal X}}\|\left(b-I_d (b)\right)(x)\|,\\
y_{k+1}&=\arg\max_{y \in {\mathcal Y}} |\left(f-I_k(f)\right)(x_{k+1},y)|,
\end{alignat}
\end{subequations}
where $\|{\cdot}\|$ can be any norm on $\mathbb{R}^N$.
\end{remark}


\section{Extension to rectangular cases}

\subsection{Different number of interpolation points $x$ and $y$} \label{sec:sensor_failure}

Consider a given EIM approximation, where $d$ couples $(x_m, y_m)$, $m\in\calN:=\{1,...,d\}$ have been selected.
Suppose now that for some reason, some points $y_{m_0}$, $m_0\in\calN_0\subsetneq\calN$, must be discarded. For instance, consider that $f(x,y)$ represents
the evaluation of a physical field parametrized by $x$ by a sensor located at a point $y$. Discarding $y_{m_0}$ would model
the failure of the corresponding sensor.
One might still want to use the information provided by the physical field parametrized by $x_{m_0}$ which has been selected together with the sensor
$y_{m_0}$ during the greedy procedure. The motivation is that even if the first selected sensor fails, we can still use the information
from the associated physical field in the construction of the approximate separated representation.

Our idea, inspired from Lemma~\ref{lemma_0} is to choose $D=(F^T)^{\dagger}$, where $\cdot^{\dagger}$ denotes the pseudo-inverse.
The matrices $D$ and $F$ are both rectangular and in $\mathbb{R}^{d\times d_0}$ with $d_0=\mathrm{card}(\calN_0)$.
We recall that if $A\in\mathbb{R}^{d_1\times d_2}$, then $A^{\dagger}\in\mathbb{R}^{d_2\times d_1}$ is the unique matrix verifying
(i) $AA^{\dagger}A=A$, (ii) $A^{\dagger}AA^{\dagger}=A^{\dagger}$, (iii) $(AA^{\dagger})^T=AA^{\dagger}$, (iv) $(A^{\dagger}A)^T=A^{\dagger}A$.
Choosing $D=(F^T)^{\dagger}$, the interpolation properties~\eqref{eq:interpolation_prop1}-\eqref{eq:interpolation_prop2} will not be verified anymore, but
since the goal is to find an approximation formula and not necessarily an interpolation formula, we can define the rectangular EIM approximation by
\begin{equation}
\label{eq:rect_approx}
\sum_{l\in\calN}~\sum_{m\in\calN_0} (F^T)^{\dagger}_{l,m}f(x_l,y)f(x,y_m). 
\end{equation}

As a numerical illustration, fix $\vec{v}=(1~2~3)^T$ and 
consider the function $\mathbb{R}^{3}\times\mathbb{R}\ni(\vec{x},y)\mapsto f(\vec{x},y):=\cos((\vec{v}\cdot\vec{x})y)\in(-1,1)$.
Suppose that $d=8$ couples of points $(x_k,y_k)$, $k\in\calN=\{1,...,d\}$, have been selected by the greedy
procedure according to~\eqref{eq:points_select1}-\eqref{eq:points_select2}.
Consider any pair $i,j\in\calN$, $i\ne j$ and set $\calN_0=\calN\setminus\{i,j\}$.
The relative $\ell^2$-norm error on 1000 sampling points in $(0,1)^3\times(0,1)$ is computed 
by the two following algorithms: (i)~using the classical EIM approximation with the $6$ couples of points
$(x_k,y_k)$, $k\in\calN_0$, and (ii)~using the approximation~\eqref{eq:rect_approx} with the 6 points
$\{x_k\}_{k\in\calN_0}$ in the $x$-dimension and the 8 points $\{y_k\}_{k\in\calN}$ in the $y$-dimension.
The relative $\ell^2$-norm error has been computed for all $\calN_0=\calN\setminus\{i,j\}$,
$i,j\in\calN$ such that $i\ne j$.
In case (i), the maximum, minimum, and mean relative $\ell^2$-norm errors are respectively $6.8\times 10^{-4}$, $3.0\times 10^{-6}$, and $3.6\times 10^{-5}$,
whereas in case (ii), these errors are respectively $2.3\times 10^{-5}$, $7.6\times 10^{-7}$, and
$2.4\times 10^{-6}$. This example illustrates the fact that using the rectangular approximation so as to keep the information from 
$x_i$ and $x_j$, $i,j\in\calN$ such that $i\ne j$, yields a better accuracy than discarding this information and using the square approximation.


\subsection{Application to GEIM~\cite{GEIM}}

Consider a set $\mathcal{F}$ of functions $f$ defined over $\Omega\subset\mathbb{R}$, and a dictionary $\Sigma$ of linear forms $\sigma$ defined
over $\mathcal{F}$. Consider $d$ functions $(f_1, ..., f_d)\in\mathcal{F}^d$ and $d$ linear forms $(\sigma_1,...,,\sigma_d)\in\Sigma^d$
and define the matrix $\hat{F}=\left(\sigma_l(f_m)\right)_{1\leq l,m\in\leq d}$.

\begin{lemma}[Existence for GEIM]
Assume that $\hat{F}$ is invertible. Then, setting $D = \hat{F}^{-T}$, the linear combination
\begin{equation}
\label{eq:decomp_GEIM}
I_d(f)= \sum_{1\leq l,m\leq d} D_{l,m} \sigma_l(f) f_m
\end{equation}
satisfies for all $m\in\{1,\ldots,d\}$,
\begin{subequations}
\begin{alignat}{4}
\sigma_m(f)&=\sigma_m(I_d(f)), \quad &\textnormal{for all }& f \in \mathcal{F},\label{eq:interp_req_GEIM1}\\
\sigma(f_m)&=\sigma(I_d(f_m)), \quad &\textnormal{for all }& \sigma \in \Sigma.\label{eq:interp_req_GEIM2}
\end{alignat}
\end{subequations}
\end{lemma}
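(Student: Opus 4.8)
The plan is to mimic the computation in the proof of Lemma~\ref{lemma_0}, replacing the evaluations $f(x,y_m)$ and $f(x_l,y)$ by the abstract quantities $\sigma_l(f)$ and $f_m$, and exploiting the linearity of the forms $\sigma\in\Sigma$. Concretely, to establish~\eqref{eq:interp_req_GEIM1} I would apply a fixed form $\sigma_{m_0}$ to the representation~\eqref{eq:decomp_GEIM}. Since each $\sigma_{m_0}$ is linear, it passes through the finite sum and acts only on the vectors $f_m$ (the scalars $D_{l,m}\sigma_l(f)$ come out), giving
\begin{equation*}
\sigma_{m_0}(I_d(f)) = \sum_{1\le l,m\le d} D_{l,m}\,\sigma_l(f)\,\sigma_{m_0}(f_m) = \sum_{1\le l\le d}\left[\sum_{1\le m\le d}\sigma_{m_0}(f_m)\,D_{l,m}\right]\sigma_l(f).
\end{equation*}
The inner bracket is the $(m_0,l)$ entry of $\hat{F}D$, since $\hat{F}_{m_0,m}=\sigma_{m_0}(f_m)$. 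With $D=\hat{F}^{-T}$ one has $\hat{F}D=\hat{F}\hat{F}^{-T}$, which is \emph{not} in general the identity, so I would instead read the bracket as an entry of $D^{T}\hat{F}^{T}=(\hat{F}D)^{T}$; the cleaner route is to note the bracket equals $(\hat{F}\hat{F}^{-T})_{m_0,l}$ only after one is careful about transposes. To avoid this pitfall I would rewrite the bracket directly as $\sum_m \hat{F}_{m_0,m}(\hat{F}^{-T})_{l,m}=\sum_m \hat{F}_{m_0,m}(\hat{F}^{-1})_{m,l}=\delta_{m_0,l}$, which collapses the outer sum to $\sigma_l(f)$ at $l=m_0$, yielding $\sigma_{m_0}(I_d(f))=\sigma_{m_0}(f)$ for all $f\in\mathcal F$.

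For~\eqref{eq:interp_req_GEIM2} I would argue symmetrically. Fix $f_{m_0}$ and apply an arbitrary $\sigma\in\Sigma$ to $I_d(f_{m_0})$; again linearity gives $\sigma(I_d(f_{m_0}))=\sum_{l,m}D_{l,m}\sigma_l(f_{m_0})\sigma(f_m)$. Here the scalar $\sigma_l(f_{m_0})=\hat{F}_{l,m_0}$ contracts against $D=\hat{F}^{-T}$: the sum over $l$ of $(\hat{F}^{-T})_{l,m}\hat{F}_{l,m_0}=\sum_l(\hat{F}^{-1})_{m,l}\hat{F}_{l,m_0}=\delta_{m,m_0}$, leaving $\sum_m\delta_{m,m_0}\sigma(f_m)=\sigma(f_{m_0})$, which is exactly~\eqref{eq:interp_req_GEIM2}. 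Note that~\eqref{eq:interp_req_GEIM1} uses invertibility through $\hat{F}\hat{F}^{-1}=\mathrm{Id}$ acting on the first index, while~\eqref{eq:interp_req_GEIM2} uses $\hat{F}^{-1}\hat{F}=\mathrm{Id}$ acting on the second, so both one-sided inverses are needed, which is why invertibility of $\hat{F}$ is the hypothesis.

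The main obstacle is purely bookkeeping: keeping track of which index of $\hat F$ is summed against which index of $D=\hat F^{-T}$, since the transpose in the definition of $D$ means the two interpolation identities contract $D$ on opposite indices. There is no analytic difficulty whatsoever—no topology on $\mathcal F$ or $\Sigma$ is needed, only the linearity of the forms and the two defining relations $\hat F\hat F^{-1}=\hat F^{-1}\hat F=\mathrm{Id}$. Thus the proof is essentially a transcription of Lemma~\ref{lemma_0}, and I expect it to be a few lines once the index conventions are fixed; I would state both contractions explicitly as $\sum_m\hat F_{m_0,m}(\hat F^{-1})_{m,l}=\delta_{m_0,l}$ and $\sum_l(\hat F^{-1})_{m,l}\hat F_{l,m_0}=\delta_{m,m_0}$ to make the argument transparent.
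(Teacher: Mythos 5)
Your proposal is correct and follows essentially the same route as the paper: expand $\sigma_{m_0}(I_d(f))$ and $\sigma(I_d(f_{m_0}))$ by linearity and observe that the two resulting index contractions reduce to $\hat F\hat F^{-1}=\mathrm{Id}$ and $\hat F^{-1}\hat F=\mathrm{Id}$ when $D=\hat F^{-T}$, exactly as in the paper's (and Lemma~\ref{lemma_0}'s) argument. The only blemish is the passing remark identifying the first bracket with an entry of $\hat F D$ (it is an entry of $\hat F D^{T}$), but you catch this yourself and the explicit contractions you settle on are the right ones.
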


\begin{proof}
\eqref{eq:interp_req_GEIM1} implies that
$\sigma_{m_0}(f)=\sigma_{m_0}\left(I_d(f)\right)=\sum_{1\leq l\leq d}\left(\sum_{1\leq m\leq d}D_{l,m}\sigma_{m_0}(f_m)\right) \sigma_l(f)$ and~\eqref{eq:interp_req_GEIM2} implies that
$\sigma(f_{m_0})=\sigma\left(I_d(f_{m_0})\right)=\sum_{1\leq m\leq d}\left(\sum_{1\leq l\leq d}D_{l,m}\sigma_l(f_{m_0})\right) \sigma(f_m)$, meaning that
$D = \hat{F}^{-T}$ can be chosen.
\end{proof}

\begin{lemma}[Stopping criterion for GEIM]
Assume that the matrix $\hat{F}^k:=\left(\sigma_l(f_m)\right)_{1 \le l,m \le k}$ is invertible. If
$|\sigma_{k+1}(f_{k+1}) - \sigma_{k+1}(I_k(f_{k+1}))| \neq 0$, then the matrix $\hat{F}^{k+1}:=(\sigma_l(f_m))_{1 \le l,m \le k+1}$ is invertible.
\end{lemma}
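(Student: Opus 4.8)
The plan is to transcribe the proof of Lemma~\ref{lemma_1} into the GEIM setting, under the correspondence in which the selected points $x_l$ are replaced by the selected forms $\sigma_l$, the selected points $y_m$ by the selected functions $f_m$, the evaluation $f(x_l,\cdot)$ by $\sigma_l(\cdot)$, and $f(\cdot,y_m)$ by the function $f_m$. I would argue by contradiction: assume $\hat{F}^{k+1}$ is not invertible, so that its columns are linearly dependent, i.e. there exist scalars $(\lambda_1,\dots,\lambda_{k+1})$, not all zero, with $\sum_{m=1}^{k+1}\lambda_m\sigma_l(f_m)=0$ for every $l\in\{1,\dots,k+1\}$. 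Rewriting this as $\sum_{1\le m\le k}\lambda_m\sigma_l(f_m)=-\lambda_{k+1}\sigma_l(f_{k+1})$, I would first note that $\lambda_{k+1}\neq 0$, since $\lambda_{k+1}=0$ would produce a nontrivial kernel vector $(\lambda_1,\dots,\lambda_k)$ of $\hat{F}^k$ and contradict its invertibility.

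Next I would introduce $D^k=(\hat{F}^k)^{-T}$, so that $I_k(f)=\sum_{1\le l,m\le k}D^k_{l,m}\sigma_l(f)f_m$, and record the defining relation $\sum_{1\le l\le k}D^k_{l,m_0}\sigma_l(f_m)=\delta_{m,m_0}$ for all $m,m_0\in\{1,\dots,k\}$, which is precisely what the Existence lemma for GEIM provides (it is the identity $(\hat{F}^k)^T D^k={\rm Id}$). Multiplying the dependency relation by $D^k_{l,m_0}$, summing over $l\in\{1,\dots,k\}$, and exchanging the two finite sums, this relation collapses the left-hand side to $\lambda_{m_0}$ and yields $\lambda_{m_0}=-\lambda_{k+1}\sum_{1\le l\le k}D^k_{l,m_0}\sigma_l(f_{k+1})$.

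The final step is to apply the selected form $\sigma_{k+1}$, using its linearity, to the combination $\sum_{1\le m_0\le k}\lambda_{m_0}f_{m_0}$. On one hand, substituting the expression just obtained for $\lambda_{m_0}$ and recognizing the resulting double sum as $\sigma_{k+1}(I_k(f_{k+1}))=\sum_{1\le l,m\le k}D^k_{l,m}\sigma_l(f_{k+1})\sigma_{k+1}(f_m)$ gives $\sum_{1\le m_0\le k}\lambda_{m_0}\sigma_{k+1}(f_{m_0})=-\lambda_{k+1}\sigma_{k+1}(I_k(f_{k+1}))$. On the other hand, the row $l=k+1$ of the dependency relation reads $\sum_{1\le m_0\le k}\lambda_{m_0}\sigma_{k+1}(f_{m_0})=-\lambda_{k+1}\sigma_{k+1}(f_{k+1})$. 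Equating the two and dividing by $\lambda_{k+1}\neq 0$ yields $\sigma_{k+1}(f_{k+1})=\sigma_{k+1}(I_k(f_{k+1}))$, contradicting the hypothesis, and thereby proving that $\hat{F}^{k+1}$ is invertible.

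Since the argument is essentially a dictionary translation of an already established result, I expect no genuine difficulty. The one place demanding care is the bookkeeping in the last step, where the two free arguments play asymmetric roles — a function on one side and the linear form $\sigma_{k+1}$ on the other — so that passing the scalars $\lambda_{m_0}$ inside must be justified by linearity of $\sigma_{k+1}$ rather than by a pointwise evaluation, and one must correctly match the resulting double sum with $\sigma_{k+1}(I_k(f_{k+1}))$.
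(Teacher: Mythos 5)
Your proposal is correct and follows essentially the same route as the paper: argue by contradiction, note $\lambda_{k+1}\neq 0$, use $D^k=(\hat{F}^k)^{-T}$ to collapse the dependency relation to $\sum_{1\le m_0\le k}\lambda_{m_0}f_{m_0}=-\lambda_{k+1}I_k(f_{k+1})$, then apply $\sigma_{k+1}$ and the row $l=k+1$ to conclude $\sigma_{k+1}(f_{k+1})=\sigma_{k+1}(I_k(f_{k+1}))$. The paper's proof is exactly this dictionary translation of Lemma~\ref{lemma_1}, so there is nothing to add.
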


\begin{proof}
Assume that the matrix $\hat{F}^{k+1}$ is not invertible. Then there exists 
$(\lambda_1, \ldots, \lambda_{k+1})$ not all zero and with $\lambda_{k+1}\neq 0$, such that
$\sum_{1\le m\le k} \lambda_m \sigma_l(f_m) = - \lambda_{k+1} \sigma_l(f_{k+1})$, $\forall l \in \{1, \ldots,k+1\}$. We have
$\sum_{1\le l\le k} D^k_{l,m_0} \sum_{1\le m\le k} \lambda_m \sigma_l(f_m)= - \lambda_{k+1} \sum_{1\le l\le k} D^k_{l,m_0} \sigma_l(f_{k+1})$, where
$D^k=(\hat{F}^k)^{-T}$. From the definition of $D^k$ and~\eqref{eq:decomp_GEIM}, we deduce that
$\sum_{1\le m_0\le k} \lambda_{m_0} f_{m_0} = -\lambda_{k+1} I_k(f_{k+1})$. Applying $\sigma_{k+1}$ to both sides of this relation gives
$\sigma_{k+1}(f_{k+1}) = \sigma_{k+1}(I_k(f_{k+1}))$, yielding the desired contradiction.
\end{proof}

The generalization of Theorem~\ref{existence2} to GEIM, for which any norm on $\mathcal{F}$ can be taken, follows immediately.

Let us now return to the setting of sensor failure discussed in Section~\ref{sec:sensor_failure}. This setting nicely fits the GEIM.
Suppose that for all $\sigma\in\Sigma$, there exists a unique $x_{\sigma}\in\Omega$ such that $\sigma(f)=f(x_{\sigma})$. In this setting, the functions $f$ represent the
parametrized physical field depending on $x$, and $\sigma(f)=f(x_{\sigma})$ represents a sensor located at the point $x_{\sigma}$, that measures the value of $f$ at this point.
Discarding the broken sensor $m_0$, we can produce the following rectangular approximation
\begin{equation}
\label{eq:decomp_GEIM_rectangular}
\sum_{l\in\calN_0}~\sum_{m\in\calN} (\hat{F}^T)^{\dagger}_{l,m}\sigma_l(f) f_m,
\end{equation}
which should deliver a more accurate representation of $f$ than if the functions $f_{m_0}$ were also discarded as in the classical (square) EIM context.

\vspace{-0.5cm}
\section*{Acknowledgements}
\vspace{-0.3cm}
The authors would like to thank Y. Maday (Laboratoire Jacques-Louis Lions) for fruitful discussions.
The work of T. Leli\`evre is supported by the European Research Council under the European Union's Seventh Framework Programme
(FP/2007-2013) / ERC Grant Agreement number 614492.

\end{document}